\documentclass[preprint,12pt]{elsarticle}

\usepackage{epsfig}

\usepackage{amssymb}
\usepackage{amsmath}
\usepackage{amsthm}

\usepackage{geometry}
\usepackage{ulem}
\usepackage[T1]{fontenc}

\usepackage{setspace}
\usepackage{tikz}
\usepackage{mathdots}
\usepackage{algorithm}
\usepackage[tworuled,algo2e]{algorithm2e}

\DeclareMathAlphabet\mathbfcal{OMS}{cmsy}{b}{n}

\foreach \x in {a,...,z}{
    \expandafter\xdef\csname bf\x \endcsname{\noexpand\ensuremath{\noexpand\mathbf{\x}}} 
}
\foreach \x in {A,...,Z}{
    \expandafter\xdef\csname bf\x \endcsname{\noexpand\ensuremath{\noexpand\mathbf{\x}}} 
    \expandafter\xdef\csname bb\x \endcsname{\noexpand\ensuremath{\noexpand\mathbb{\x}}} 
    \expandafter\xdef\csname cal\x \endcsname{\noexpand\ensuremath{\noexpand\mathcal{\x}}} 
}

\newcommand{\T}{{\sf T}} 
\def\rank{{\mathrm{rank}}} 
\def\rankL{{\mathrm{rank_L}}} 
\def\rankR{{\mathrm{rank_R}}} 
\def\spanL{{\mathrm{span_L}}} 
\def\spanR{{\mathrm{span_R}}} 

\newcommand{\dmul}{\cdot_{\scriptscriptstyle\triangleright}}
\newcommand{\imul}{\cdot_{\scriptscriptstyle\triangleleft}}
\newcommand{\chid}{\chi_{\scriptscriptstyle\triangleright}}
\newcommand{\chii}{\chi_{\scriptscriptstyle\triangleleft}}

\newcommand{\ci}{\bfi} 
\newcommand{\cj}{\bfj} 
\newcommand{\ck}{\bfk} 
\newcommand{\Ci}{\bbC_{\bfi}} 

\newtheorem{definition}{Definition}
\newtheorem{theorem}[definition]{Theorem}
\newtheorem{proposition}[definition]{Proposition}
\newtheorem{lemma}[definition]{Lemma}
\newtheorem{corollary}[definition]{Corollary}
\newtheorem{remark}[definition]{Remark}
\newtheorem{example}[definition]{Example}

\journal{}

\begin{document}

\begin{frontmatter}

\title{On the rank of quaternion Hankel matrices} 

\author[labelGIPSA]{Philippe Flores\corref{cor1}}
\ead{flores.philipe@gmail.com}
\cortext[cor1]{Corresponding author.}
\author[labelCRAN]{Julien Flamant}
\author[labelGIPSA]{Nicolas Le Bihan}
\affiliation[labelGIPSA]{organization={CNRS, GIPSA-lab, Universite Grenoble Alpes, Grenoble INP},
addressline={11 Rue des Mathématiques},
            city={Saint-Martin-d'Hères},
            postcode={38402},
            country={FRANCE}}
\affiliation[labelCRAN]{organization={Université de Lorraine, CNRS},
            addressline={CRAN},
            city={F-54000},
            postcode={Nancy},
            country={France}}

\begin{abstract}
    This paper discusses the left and right ranks of quaternion matrices with Hankel structure.
    While they are in general different for arbitrary quaternion matrices, we show that the left and right ranks of quaternion Hankel matrices are equal.
    Moreover, we establish the relation between Hankel matrices and the existence of linear recurrence relations with quaternion coefficients and discuss some practical implications for computational methods relying on low-rank properties of quaternion Hankel matrices. 
\end{abstract}

\begin{keyword}
    Quaternion linear algebra \sep 
    Hankel matrices \sep 
    Linear Recurrence Relations
    \MSC[2020]{15A33, 47B35, 15A03}
\end{keyword}
\end{frontmatter}

\section{Introduction}

Structured matrices such as Hankel matrices arise in many fields including signal processing, control theory, system identification or time series analysis, among others \cite{arbib_foundations_1980}.
Formally, a Hankel matrix $\bfH$ with $F$ rows and $G$ columns has constant anti-diagonals, i.e., 
\begin{equation}
\label{eq:hankel_def}
\bfH = \begin{bmatrix}
        h_1 & \cdots & h_F & \cdots & h_G \\
        \vdots & \iddots & & \iddots & \vdots \\
        h_F & \cdots & h_G & \cdots & h_N
    \end{bmatrix}, \quad N = F+G-1,
\end{equation}
where elements $\{h_n\}^N_{n=1}$ belong to an arbitrary ring $\bbK$.
By construction, \eqref{eq:hankel_def} defines a one-to-one map $\calH_F: \bbK^{N} \to \bbK^{F\times G}$ such that $\bfH = \calH_F(\bfh)$ where $\bfh = [h_1, \ldots, h_N]^\top$.
For Hankel matrices over a field, the following result is classical.
\begin{theorem}[see e.g., {\cite[Corollary~5.1]{heinig_algebraic_2013}} or {\cite[Lemma~3.1]{gillard_hankel_2022}}] \label{thm:hankel_field} 
    Let $\bbK = \bbR$ or $\bbC$ and let $\bfH\in\bbK^{F\times G}$ be a Hankel matrix. 
    Then, there exists an integer $S\leq \left\lfloor\frac{F+G}2\right\rfloor$ such that $\rank(\bfH) = \min(F,G,S)$.
\end{theorem}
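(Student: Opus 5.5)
I would prove this with the classical column-dependency (linear recurrence) argument for Hankel matrices. The main tool is the shift structure: for a Hankel matrix $\bfH=\calH_F(\bfh)$ with columns $\bfc_1,\dots,\bfc_G\in\bbK^F$, the entries satisfy $\bfH_{i,j+1}=\bfH_{i+1,j}$. So the first $F-1$ entries of $\bfc_{j+1}$ equal the last $F-1$ entries of $\bfc_j$. By symmetry under transposition, which maps a Hankel matrix to a Hankel matrix and swaps $F$ and $G$, I may assume $F\le G$. If $\bfH$ has full rank $F$, then $S$ can be taken equal to $F$, since $F\le\lfloor (F+G)/2\rfloor$ when $F\le G$. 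So the substance is the rank-deficient case $r:=\rank(\bfH)<F$.

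\textbf{Step 1: locate the first dependent column.} Let $r<F$ and let $k$ be the smallest index such that $\bfc_1,\dots,\bfc_{k}$ are linearly dependent. Then $\bfc_k=\sum_{i=1}^{k-1}a_i\bfc_i$ with $\bfc_1,\dots,\bfc_{k-1}$ independent, so $k-1\le r<F$. In terms of the sequence, this says $h_{m+k-1}=\sum_{i=1}^{k-1}a_ih_{m+i-1}$ for $m=1,\dots,F$. This is a linear recurrence of order $k-1$ valid on the window $h_1,\dots,h_{F+k-1}$.

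\textbf{Step 2: propagate the recurrence as far as possible.} If the recurrence held on the whole sequence, then by the shift structure every later column would lie in the span of $\bfc_1,\dots,\bfc_{k-1}$. This would give $\rank(\bfH)=k-1$, hence $S=k-1\le F-1$, which is fine. In general the recurrence may break at some index $n_0>F+k-1$. Let $n_0$ be the first index at which $h_{n_0}\ne\sum_i a_i h_{n_0-k+i}$. Then columns $\bfc_{k},\dots,\bfc_{n_0-F}$ lie in $\mathrm{span}(\bfc_1,\dots,\bfc_{k-1})$.

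\textbf{Step 3: show the rank jumps to the maximum after the break.} For the columns after the break, I would apply to the columns $(\bfc_j)_{j\ge k}$ the difference operator $\bfd_j=\bfc_j-\sum_i a_i\bfc_{j-k+i}$. The vectors $\bfd_j$ are the columns of a Hankel matrix built from the ``defect'' sequence $e_m=h_m-\sum a_ih_{m-k+i}$. This defect sequence vanishes up to $n_0-1$ and is nonzero at $n_0$. Its columns therefore form a triangular pattern (a lower anti-triangular block), so they are independent and add $\min(F,\,G-(n_0-F))$ new dimensions. Combining this with the $k-1$ independent leading columns gives
\[
\rank(\bfH)=\min\bigl(F,\;k-1+G-(n_0-F)+\ldots\bigr).
\]
I then set $S:=(k-1)+(N+1-n_0)+\ldots$, up to careful bookkeeping, which is the Iohvidov characteristic-pair construction. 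The constraint $S\le\lfloor (F+G)/2\rfloor$ should follow from $n_0\ge F+k$, i.e. from the recurrence holding on at least $F$ windows.

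The main obstacle is Step 3. The difficulty is proving exact independence, and hence the exact value of the rank, after the break with correct index bookkeeping, and then verifying the bound on $S$. In particular, one must check that the new columns combined with $\bfc_1,\dots,\bfc_{k-1}$ remain independent. I would first try the anti-triangular structure of the defect Hankel matrix, together with the invariance of rank under invertible column operations. Alternatively, since the statement is classical (Heinig--Rost, Corollary~5.1), one can invoke it directly.
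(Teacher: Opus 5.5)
\textbf{Closing the literal statement.} As literally stated, the theorem does not use the Hankel structure at all. The choice $S:=\rank(\bfH)$ works for every $F\times G$ matrix, since $\rank(\bfH)\le\min(F,G)\le\lfloor (F+G)/2\rfloor$ and then $\min(F,G,S)=S$. Your first paragraph gives exactly this argument for the full-rank case. It closes the case $r<F$ just as well, because $r<F\le\lfloor (F+G)/2\rfloor$.

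In fact, with $F\le G$ and $r<F$, the requirement $\min(F,G,S)=r$ forces $S=r$. So Steps~1--3 can only recompute $r$, which the statement does not ask for. As submitted, that case is never actually closed: Step~3 ends in ``$+\ldots$'' and ``careful bookkeeping'', and the bound on $S$ is only said to ``follow''. Falling back on Heinig--Rost would be circular.

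The proposed source of the bound is also wrong. The inequality $n_0\ge F+k$ only yields $(k-1)+(N+1-n_0)\le G-1$, which can exceed $\lfloor (F+G)/2\rfloor$. Any candidate exceeding $\min(F,G)$ should simply be capped at $\min(F,G)$.

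\textbf{The non-trivial version.} The paper only cites this result. The real content behind it, which its Theorem~\ref{thm:main_theo} generalizes, is that one $S$ depending only on $h_1,\dots,h_N$ works for every split $F+G-1=N$ (the plateau of Figure~\ref{fig:figure_rank}). If that is what you are after, your route has two further gaps.

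First, Step~3 needs a Kronecker-type fact, not ``the new columns remain independent''. Set $T:=\min(F,N+1-n_0)$; independence is impossible once $T+k-1>F$. What you need is that the leading $(k-1)\times(k-1)$ block of $\bfH$ is nonsingular. This follows from the independence of $\bfc_1,\dots,\bfc_{k-1}$ by the propagation argument in the proof of Theorem~\ref{thm:kronecker_quat}, which only needs your recurrence on $h_1,\dots,h_{F+k-1}$. Since the $\bfd_j$ span exactly the vectors supported on the last $T$ coordinates, you then get $\rank(\bfH)=T+\min(F-T,k-1)=\min(F,\,N+k-n_0)$.

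Second, $k$ and $n_0$ are read off the particular $F\times G$ matrix, so nothing shows that the resulting $S$ is the same for all $F$. Showing that requires tying them to an $F$-independent invariant. The natural one is the minimal order $m$ of a recurrence valid on all of $h_1,\dots,h_N$ (the $m_{\rm L}$ of Definition~\ref{def:m}); the link would come from a Massey-type argument.

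The paper's route starts from $m$ directly. It takes a minimal infinite extension of rank $m$ (Lemma~\ref{lem:inf_extension}), uses the nonsingular leading $m\times m$ block (Theorem~\ref{thm:kronecker_quat}), and compares $m$ with $F$ and $G$ case by case. This yields $S=\min(m,F+G-m)\le\lfloor (F+G)/2\rfloor$ automatically.
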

This result exists in different forms, see for instance \cite[Theorem 4]{al_hankel_2017}. 
In particular, it is central for computational methods relying on low-rank properties of Hankel matrices, such as Hankel structured low-rank approximation \cite{gillard_hankel_2022} or singular spectrum analysis \cite{golyandina_analysis_2001}.

Up to our knowledge, there is no equivalent to Theorem \ref{thm:hankel_field} for quaternion Hankel matrices.
Indeed, while the study of matrices with quaternions entries has attracted a growing interest for the past decades \cite{zhang_quaternions_1997,rodman2014topics}, the use and study of Hankel matrices with quaternion entries has received little interest so far.
This paper aims at filling this gap by extending such result to the quaternion setting.
The main challenge lies in handling the non-commutativity of quaternion multiplication, which for example requires the distinction between left and right linearity properties (such as the matrix rank).

This paper investigates the left and right ranks of quaternion Hankel matrices. 
In particular, we first generalize Theorem \ref{thm:hankel_field} to the quaternion setting in Theorem \ref{thm:main_theo}.
This result permits to show that for any quaternion Hankel matrix, its left and right ranks are equal (Theorem \ref{thm:equalityofranks}).
Our approach relies on the careful adaptation of the approach exposed in \cite{al_hankel_2017} for Hankel matrices with real or complex coefficients using the concept of infinite Hankel matrices.
However, the non-commutativity of quaternion algebra and the distinction between left and right linear combinations necessitate entirely new proofs for several key results.
We further highlight the relation between the rank of quaternion Hankel matrices and the existence of left (or right) Linear Recurrence Relations (LRRs) over the generating quaternion sequence. 

This paper is organized as follows.
Section \ref{sec:prelim} introduces relevant quaternion linear algebra notions.
Section \ref{sec:quaternion_hankel} then presents the main results on the rank of quaternion Hankel matrices and their implications on quaternion LRRs. 
Discussions and concluding remarks are held in Section \ref{sec:discussion}.

\section{Preliminaries} \label{sec:prelim}

The set of quaternions $\bbH$ is a four-dimensional normed division algebra over the real numbers $\bbR$.
It is usually defined as $\bbH = \left\{ a+b\ci+c\cj+d\ck \;\middle|\; (a,b,c,d) \in\bbR^4 \right\}$  where the canonical imaginary units $\{\ci, \cj, \ck\}$ satisfy the well-known relations \cite{hamilton_theory_1844}:
\begin{equation*}
    \ci^2 = \cj^2 = \ck^2 = \ci\cj\ck = -1.
\end{equation*}
These relations imply that, for instance, $\ci\cj = -\cj\ci$, and therefore the product of two quaternions is  non-commutative, i.e., for $p, q \in \bbH$, $p q \neq q p$ in general.
Quaternion-valued vectors and matrices are readily defined \cite{zhang_quaternions_1997,rodman2014topics}, as usual vectors and matrices with quaternions coefficients, and denoted as $\bfa = \left[a_n\right]^N_{n=1} \in\bbH^N$ and $\bfA = \left[a_{mn}\right]^{M,N}_{m,n=1} \in\bbH^{M\times N}$, respectively.

The non-commutativity of quaternion multiplication requires to adapt the definition of several key linear algebra concepts such as matrix rank.
Indeed, for a quaternion-valued matrix $\bfA\in\bbH^{M\times N}$, two notions of ranks exist, namely left and right ranks \cite{flamant2024multilinear}:
\begin{equation*}
    \rankL(\bfA) := \dim(\spanL(\bfA)), \quad \text{and} \quad \rankR(\bfA) := \dim(\spanR(\bfA)),
\end{equation*}
where $\spanL$ and $\spanR$ denote the left and right linear spans of columns of $\bfA = \left[\bfa_1\: \ldots\:\bfa_N\right]$:
\begin{equation*}
    \spanL(\bfA) := \left\{ \sum\limits_{n=1}^N \alpha_n \bfa_n\;\middle|\; \boldsymbol{\alpha}\in\bbH^N \right\}, \quad \text{and} \quad \spanR(\bfA) := \left\{ \sum\limits_{n=1}^N \bfa_n\beta_n \;\middle|\; \boldsymbol{\beta}\in\bbH^N \right\}.
\end{equation*}
In general, the left and right ranks of a matrix are different, see e.g., \cite[Example 7.3]{zhang_quaternions_1997}.
\begin{remark}\label{rk:rowrank_vs_colrank}
   For convenience, we mostly focus in this paper on linear spans of columns to study the left and right rank properties of quaternion matrices.
   However, one can equivalently define the rank in terms of linear spans of rows. 
   In particular, if a quaternion matrix has right rank $S$, by definition the right span of its columns has dimension $S$, which in turn is also the dimension defined by the left linear span of its rows. 
   The converse holds as well. 
   See also \cite{zhang_quaternions_1997} for further discussion. 
\end{remark}
The necessary distinction between left and right ranks motivates the need for introducing two types of matrix products between quaternion matrices. 
\begin{definition}[Matrix products for quaternion matrices]
    Let $\bfA\in\bbH^{I\times J}$ and $\bfB\in\bbH^{J\times K}$. 
    The direct and reverse products of $\bfA$ with $\bfB$ are defined as:  
    \begin{equation*}
        \bfA\dmul\bfB := \left[ \sum\limits_{j=1}^J a_{ij}b_{jk} \right]^{I,K}_{i,k=1} \quad \text{and} \quad \bfA\imul\bfB := \left[ \sum\limits_{j=1}^J b_{jk}a_{ij} \right]^{I,K}_{i,k=1}.
    \end{equation*}
\end{definition}
In particular, for a given matrix $\bfA \in \bbH^{M\times N}$ and a vector $\bfb \in \bbH^{N}$, notation $\bfA \dmul \bfb$ refers to the right linear combination of the columns of $\bfA$ using entries of $\bfb$ as coefficients; conversely, $\bfA\imul\bfb$ indicates a left linear combination of such columns.
For arbitrary $\bfA, \bfB$ matrices, the products $\bfA\dmul\bfB$ and $\bfA\imul\bfB$ are of the same size.
However, in general $\bfA\dmul\bfB\neq\bfA\imul\bfB$ due to non-commutativity of quaternion multiplication.
Nonetheless, one has $\left(\bfA\dmul\bfB\right)^\T = \bfB^\T\imul\bfA^\T$, where $\bfA^\T$ denotes the usual transpose of $\bfA$.

To study the rank of quaternion-valued matrices as in Lemma \ref{lem:symmetric_rank} below, two complex adjoint matrices are introduced, each one associated to its corresponding matrix product.
\begin{definition}[Complex adjoint matrices] \label{def:adjoint}
    Let $\bfA\in\bbH^{N\times M}$ and write $\bfA = \bfA_1+\bfA_2\cj$ with $\bfA_1,\bfA_2\in\Ci^{N\times M}$ where $\Ci = \bbR\oplus\ci\bbR$.
    The direct and reverse complex adjoint matrices of $\bfA$ are the matrices $\chid(\bfA)$ and $\chii(\bfA)$ of size $2N\times 2M$ defined by:
    \begin{equation*}
        \chid(\bfA) := \begin{bmatrix}
            \bfA_1 & \bfA_2 \\ -\overline{\bfA_2} & \overline{\bfA_1}
        \end{bmatrix}, \quad \text{and} \quad 
        \chii(\bfA) := \begin{bmatrix}
            \bfA_1 & -\overline{\bfA_2} \\ \bfA_2 & \overline{\bfA_1}
        \end{bmatrix},
    \end{equation*}
    where $\overline{\bfA_1}$ denotes the complex conjugate of $\bfA_1$.
\end{definition}
In particular, it can be easily verified that $\chid(\bfA\dmul\bfB) = \chid(\bfA)\chid(\bfB)$ and similarly, $\chii(\bfA\imul\bfB) = \chii(\bfA)\chii(\bfB)$.
This important property permits to define an isomorphism between quaternion matrices equipped with one matrix product onto the set of complex matrices (with the corresponding adjoint block structure).

This property is used to prove a lemma on the ranks of symmetric quaternion matrices, which will turn to be useful for establishing the main results of this paper. 
\begin{lemma} \label{lem:symmetric_rank}
    Let $\bfA\in\bbH^{N\times N}$ be a symmetric matrix.
    Then, $\rankL(\bfA) = \rankR(\bfA)$.
\end{lemma}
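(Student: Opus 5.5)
The plan is to move the question to complex matrices through the two adjoints of Definition~\ref{def:adjoint}, and to let symmetry turn one adjoint into the transpose of the other.

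\emph{Step 1: each quaternion rank is half a complex rank.} I will establish
\begin{equation*}
\rankR(\bfA) = \tfrac12\rank\bigl(\chid(\bfA)\bigr), \qquad \rankL(\bfA) = \tfrac12\rank\bigl(\chii(\bfA)\bigr).
\end{equation*}
For the right rank I use multiplicativity, $\chid(\bfA\dmul\bfb)=\chid(\bfA)\chid(\bfb)$, together with injectivity of $\chid$. These show that $\bfb\mapsto\bfA\dmul\bfb$ has quaternion kernel $\calK$ exactly when $\chid(\bfA)$ annihilates the complex column space generated by $\chid(\calK)$. A right $\bbH$-subspace of dimension $d$ corresponds to a complex subspace of dimension $2d$, because the coordinates of $\bfb=\bfb_1+\bfb_2\cj$ are repackaged as the pair $(\bfb_1,-\overline{\bfb_2})$ and its $\cj$-twisted partner. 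Rank--nullity over $\bbC$ then gives $\rank\chid(\bfA)=2N-2\dim\calK=2\,\rankR(\bfA)$. The left case is identical, using $\chii$ and the reverse product $\bfA\imul\bfb$, since that product gives exactly the left combinations of columns. This bookkeeping, namely identifying quaternion dimension with half the complex dimension for the correct one-sided structure, is the step I expect to need the most care; everything else is algebra.

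\emph{Step 2: relate the two adjoints through the transpose.} Writing $\bfA=\bfA_1+\bfA_2\cj$, the usual transpose is $\bfA^\T=\bfA_1^\T+\bfA_2^\T\cj$. Applying Definition~\ref{def:adjoint} gives
\begin{equation*}
\chid(\bfA^\T)^\T=\begin{bmatrix}\bfA_1^\T & \bfA_2^\T\\ -\overline{\bfA_2}^\T & \overline{\bfA_1}^\T\end{bmatrix}^\T=\begin{bmatrix}\bfA_1 & -\overline{\bfA_2}\\ \bfA_2 & \overline{\bfA_1}\end{bmatrix}=\chii(\bfA).
\end{equation*}
If $\bfA$ is symmetric, then $\bfA^\T=\bfA$, so $\chii(\bfA)=\chid(\bfA)^\T$. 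The rank of a complex matrix equals the rank of its transpose, so $\rank\chii(\bfA)=\rank\chid(\bfA)$. Step 1 then yields $\rankL(\bfA)=\rankR(\bfA)$.

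As a cross-check, the statement should also follow directly from Remark~\ref{rk:rowrank_vs_colrank}. That remark says $\rankR(\bfA)$ equals the dimension of the left span of the rows of $\bfA$. For symmetric $\bfA$ the rows, read as vectors, are exactly the columns, so that dimension is $\rankL(\bfA)$. I would keep the adjoint argument as the main proof, since the remark itself rests on the same row/column duality.
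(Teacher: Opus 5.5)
Your proposal is correct and is essentially the paper's proof: the paper also combines $\rank(\chid(\bfA)) = 2\rankR(\bfA)$ and $\rank(\chii(\bfA)) = 2\rankL(\bfA)$ with the identity $\chid(\bfA^\T) = \chii(\bfA)^\T$ and the invariance of complex rank under transposition. The only difference is that the paper cites the rank--adjoint relations from \cite[Proposition 2.7]{flamant2024multilinear} rather than deriving them, so your Step~1 reproves that cited fact; your row/column cross-check via Remark~\ref{rk:rowrank_vs_colrank} is also sound.
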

\begin{proof}
    From \cite[Proposition 2.7]{flamant2024multilinear}, one has $\rank(\chid(\bfA)) = 2\rankR(\bfA)$ and $\rank(\chii(\bfA)) = 2\rankL(\bfA)$.
    For any quaternion matrix $\bfA$, by direct calculation it holds that $\chid(\bfA^\T) = \chii^\T(\bfA)$.
    Since $\bfA$ is symmetric, $\bfA^\top = \bfA$ and therefore $\chid(\bfA) = \chii^\top(\bfA)$.
    Since the rank of a complex matrix and its transpose are equal, we get $\rankL(\bfA) = \frac{1}{2} \rank(\chii(\bfA)) = \frac{1}{2}\rank(\chii^\top(\bfA)) = \frac{1}{2}\rank(\chid(\bfA)) = \rankR(\bfA)$.
\end{proof}

\section{The left and right ranks of quaternion Hankel matrices} \label{sec:quaternion_hankel} 

We first make an observation on the left and right ranks of square quaternion Hankel matrices. 
Indeed, such matrices are symmetric, and therefore by Lemma \ref{lem:symmetric_rank}, their left and right ranks are equal. 
\begin{proposition}\label{prop:squareHankel}
     Let $\bfH$ be a square quaternion Hankel matrix. Then $\rankL(\bfH) = \rankR(\bfH)$.
\end{proposition}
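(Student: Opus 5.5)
The plan is to reduce the statement directly to Lemma \ref{lem:symmetric_rank}. The only thing to check is that a square quaternion Hankel matrix is symmetric in the ordinary (non-conjugate) sense, $\bfH^\T = \bfH$.

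First, I take $F = G$ in \eqref{eq:hankel_def}, so that $N = 2F-1$ and $\bfH = \calH_F(\bfh) \in \bbH^{F\times F}$. By definition the entries are $[\bfH]_{mn} = h_{m+n-1}$ for $1\le m,n\le F$. The index $m+n-1$ is symmetric in $m$ and $n$, so $[\bfH]_{nm} = h_{n+m-1} = [\bfH]_{mn}$ for all $m,n$, which gives $\bfH^\T = \bfH$. Only the plain transpose is involved here, with no quaternion conjugation. This is exactly the notion of symmetry used in Lemma \ref{lem:symmetric_rank}, whose proof relies on $\chid(\bfA^\T) = \chii^\T(\bfA)$.

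Second, I apply Lemma \ref{lem:symmetric_rank} to $\bfA = \bfH$. This yields $\rankL(\bfH) = \rankR(\bfH)$ immediately.

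There is no real obstacle. The one point that needs care is confirming that the symmetry is the plain transpose and not the conjugate transpose. The Hankel structure gives plain-transpose symmetry, and that is what the lemma requires.
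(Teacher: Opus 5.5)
Your proof is correct and matches the paper's argument: the entries $h_{m+n-1}$ of a square Hankel matrix are symmetric in $m,n$, so $\bfH^\T = \bfH$, and Lemma~\ref{lem:symmetric_rank} then gives $\rankL(\bfH) = \rankR(\bfH)$. You were right to check that the lemma uses the plain transpose rather than the conjugate transpose, since its proof relies on $\chid(\bfA^\T) = \chii^\T(\bfA)$.
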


We now turn our attention to the precise characterization of the rank of quaternion Hankel matrices of arbitrary size using the concept of infinite Hankel matrices. 
The approach is inspired by and closely follows that of \cite{al_hankel_2017}, which addresses the case of Hankel matrices over a field.
However, extending this approach to the quaternion case requires careful handling of the non-commutativity of quaternion multiplication, which necessitates distinguishing between left and right linear combinations in both statements and proofs.
Furthermore, the lack of classical concepts (e.g., minors) in quaternion linear algebra makes it necessary to provide new proofs. 

Given $\{h_n\}_{n\geq1}$ an infinite quaternion-valued sequence, its associated infinite Hankel matrix $\bfH_\infty$ reads:
\begin{equation}
    \bfH_\infty = \begin{bmatrix}
        h_1 & \cdots & h_F & \cdots & h_G & \cdots \\
        \vdots & \iddots & & \iddots & \vdots \\
        h_F & \cdots & h_G & \cdots & h_N & \cdots \\
        \vdots & & \vdots & & \vdots & \ddots
    \end{bmatrix}.\label{eq:H_infty}
\end{equation}
It is readily observed that $\bfH_\infty$ is symmetric.
Therefore, by Lemma \ref{lem:symmetric_rank} one has $\rankL(\bfH_{\infty}) = \rankR(\bfH_\infty)$ and thus the left/right rank distinction becomes unnecessary, allowing us to drop the subscript.

The following lemma is easily adapted from \cite{al_hankel_2017} to the case of quaternion Hankel matrices.
\begin{lemma}[\cite{al_hankel_2017}, revisited]
    \label{lemma:alpin_revisited}
    Let $\bfH_{\infty}$ be an infinite Hankel matrix. The following statements are equivalent:
    \begin{enumerate}
        \item $\rank(\bfH_\infty) = S$ is finite;
        \item the first $S$ columns of $\bfH_\infty$ are left linearly independent and generate all columns of $\bfH_\infty$ as a left linear combination;
        \item the first $S$ columns of $\bfH_\infty$ are right linearly independent and generate all columns of $\bfH_\infty$ as a right linear combination.
    \end{enumerate}
\end{lemma}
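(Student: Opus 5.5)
The proof rests on the shift structure of $\bfH_\infty$. Denote by $\bfc_1,\bfc_2,\ldots$ the columns of $\bfH_\infty$, so that $\bfc_k = [h_k, h_{k+1}, \ldots]^\top$. Let $\sigma$ be the shift operator that deletes the first entry of an infinite vector. Then $\bfc_{k+1} = \sigma(\bfc_k)$. Since $\sigma$ acts entrywise, it commutes with both left and right scalar multiplication. Hence, if
\begin{equation*}
\bfc_k = \sum_{j<k}\alpha_j \bfc_j \quad\text{(left combination)}, \qquad\text{then}\qquad \bfc_{k+1} = \sum_{j<k}\alpha_j \bfc_{j+1},
\end{equation*}
and the same holds for right combinations $\bfc_k = \sum_{j<k}\bfc_j\beta_j$. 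This side-preserving shift is the only ingredient needed beyond the equality of left and right ranks of $\bfH_\infty$, which the paper derives from Lemma~\ref{lem:symmetric_rank} (because $\bfH_\infty$ is symmetric).

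\emph{(1)$\Rightarrow$(2).} Assume $\rank(\bfH_\infty)=S<\infty$, so the left span of all columns has dimension $S$. Then the columns cannot all be left linearly independent of their predecessors, so there is a smallest index $m$ such that $\bfc_m$ is a left linear combination of $\bfc_1,\ldots,\bfc_{m-1}$. By minimality of $m$, the columns $\bfc_1,\ldots,\bfc_{m-1}$ are left linearly independent.

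Next, I show by induction on $k\ge m$ that every $\bfc_k$ lies in $\spanL(\bfc_1,\ldots,\bfc_{m-1})$. If $\bfc_k$ is a left combination of $\bfc_1,\ldots,\bfc_{m-1}$, applying $\sigma$ shows that $\bfc_{k+1}$ is a left combination of $\bfc_2,\ldots,\bfc_m$. Substituting the expression of $\bfc_m$ in terms of earlier columns keeps it a left combination of $\bfc_1,\ldots,\bfc_{m-1}$, since left scalars compose on the left.

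Thus $\bfc_1,\ldots,\bfc_{m-1}$ is a left basis of the column span, so $m-1=S$. This gives (2).

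\emph{(1)$\Rightarrow$(3).} The argument is the same with right combinations. The substitution step now uses that right scalars compose on the right: $\bfc_j\beta$ with $\bfc_j=\sum_i \bfc_i\gamma_i$ gives $\sum_i\bfc_i(\gamma_i\beta)$. The resulting number of columns equals the right rank, which is $S$ by the symmetry argument.

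\emph{(2)$\Rightarrow$(1) and (3)$\Rightarrow$(1).} These are immediate. Under (2), the first $S$ columns form a left basis of the column span, so the left rank is $S$, which is finite. Equality of left and right ranks then gives $\rank(\bfH_\infty)=S$. Case (3) is symmetric, with the right rank in place of the left rank.

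The main point requiring care is the inductive step: one must check that shifting and substituting never forces a scalar to change sides. This holds precisely because $\sigma$ is applied entrywise to the vectors, while the scalar coefficients stay on their original side throughout.
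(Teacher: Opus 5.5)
Your proof is correct and follows essentially the same route as the paper. The shift operator keeps scalar coefficients on their side, so a single column dependence propagates by recurrence to all later columns, and the symmetry of $\bfH_\infty$ (via Lemma~\ref{lem:symmetric_rank}) identifies left and right ranks. Your minimal-index argument is slightly more careful: it makes explicit a step the paper only asserts "because of the shifting nature of $\varphi$", namely that left dependence of the first $S+1$ columns forces the $(S+1)$-th column itself into the left span of the first $S$.
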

\begin{proof} 
    It is sufficient to prove the equivalence between 1. and 2. as the equivalence between 1. and 3. follows directly by switching between left and right linear independence. We follow closely the proof given in \cite{al_hankel_2017}.
    Observe that the columns of $\bfH_\infty$ are obtained by a shifted sequence of the first column $\bfh_1$:
    \begin{equation*}
        \bfH_\infty = \begin{bmatrix}
            \bfh_1 & \varphi(\bfh_1) & \varphi^2(\bfh_1) & \cdots & 
        \end{bmatrix}
    \end{equation*}
    where $\varphi$ corresponds to the shifting operator such that $\varphi([h_1, h_2, \ldots]) = [h_2, h_3, \ldots]$.
    
    Now suppose that $\rank(\bfH_\infty) = S$ is finite. Since $\bfH_\infty$ is symmetric, this means that $\rankL(\bfH_\infty) = S$. 
    This implies that any set of $S+1$ columns is left linearly dependent.
    In particular, the vectors $\bfh_1, \varphi(\bfh_1),\ldots, \varphi^{S}(\bfh_1)$ are left linearly dependent.
    Because of the shifting nature of $\varphi$, $\varphi^{S}(\bfh_1) \in \spanL([\bfh_1, \varphi(\bfh_1),\ldots, \varphi^{S-1}(\bfh_1)])$. 
    By recurrence, for any $k\geq S$, $\varphi^{k}(\bfh_1) \in \spanL([\bfh_1, \varphi(\bfh_1),\ldots, \varphi^{S-1}(\bfh_1)])$ as well. 
    Since $\rankL(\bfH_\infty) = S$, the first $S$ columns are left linearly independent (otherwise we would have $\rankL(\bfH_\infty) < S$).

    Conversely, suppose that the first $S$ columns of $\bfH_\infty$ are left linearly independent and generate all columns of $\bfH_\infty$ as a left linear combination.
    While the left linear independence implies that $\rankL(\bfH_\infty)\geq S$, the fact that any column is generated by the first $S$ columns implies that $\rankL(\bfH_\infty)\leq S$ which finishes the proof.
\end{proof}

Let $\bfH^{[S]}$ be the upper-left corner submatrix of $\bfH_\infty$ of size $S\times S$.
The following theorem generalizes the Kronecker theorem to the case of quaternion Hankel matrices.
For matrices over a field, the proof of this result usually relies on minors \cite[Theorem 2]{al_hankel_2017}.
However, such notion that is not well-defined in the quaternion case due to the difficulties in defining determinants of matrices (see e.g., \cite{aslaksen2001quaternionic}).
Instead, the proof below  relies on a careful handling of left and right linearity dependence properties of columns. 
In particular, the equality between left and right ranks of infinite Hankel matrices plays a key role.  
\begin{theorem}\label{thm:kronecker_quat}
    Consider the infinite Hankel matrix $\bfH_\infty$ defined in \eqref{eq:H_infty}. If $\rank(\bfH_\infty) = S$ is finite, then $\rankL(\bfH^{[S]}) = \rankR(\bfH^{[S]}) = S$.
\end{theorem}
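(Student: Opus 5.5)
Since $\bfH^{[S]}$ is a square Hankel matrix, Proposition \ref{prop:squareHankel} gives $\rankL(\bfH^{[S]}) = \rankR(\bfH^{[S]})$. It therefore suffices to show $\rankR(\bfH^{[S]}) = S$, i.e.\ that the $S\times S$ corner is invertible. I will argue by contradiction using both the left and the right versions of Lemma \ref{lemma:alpin_revisited}; this is where the equality of left and right ranks of $\bfH_\infty$ enters.

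First, by statement 3 of Lemma \ref{lemma:alpin_revisited}, every column of $\bfH_\infty$ is a right linear combination of the first $S$ columns. Since $\bfH_\infty$ is symmetric and transposition exchanges $\dmul$ and $\imul$, the same statement applied to columns gives the row version: every row of $\bfH_\infty$ is a left linear combination of the first $S$ rows. Let $\bfR$ denote the $S\times\infty$ matrix formed by the first $S$ rows. Then $\bfH_\infty = \bfC\dmul \bfR$ for some coefficient matrix $\bfC\in\bbH^{\infty\times S}$: row $m$ of $\bfH_\infty$ is $\sum_s c_{ms}\,(\text{row } s)$, which is a left combination of rows.

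Now suppose $\bfH^{[S]}$ is singular. Its columns are then right linearly dependent, so there is a nonzero $\boldsymbol\beta\in\bbH^S$ with $\bfH^{[S]}\dmul\boldsymbol\beta = 0$. Let $\bfx$ be the first $S$ columns of $\bfH_\infty$ (an $\infty\times S$ matrix). Its first $S$ rows are exactly $\bfH^{[S]}$, and each later row is a left combination of the first $S$ rows. Hence every row $r$ of $\bfx$ can be written $r = \sum_s c_s r_s$, where the $r_s$ are the rows of $\bfH^{[S]}$, and so
\[
r\dmul\boldsymbol\beta = \sum_s c_s\,(r_s\dmul\boldsymbol\beta) = 0,
\]
using associativity: left scalars commute past the right multiplication. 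It follows that $\bfx\dmul\boldsymbol\beta = 0$ with $\boldsymbol\beta\neq 0$. This says the first $S$ columns of $\bfH_\infty$ are right linearly dependent, contradicting statement 3 of Lemma \ref{lemma:alpin_revisited}. Therefore $\rankR(\bfH^{[S]}) = S$, and the theorem follows.

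The main obstacle is the compatibility of sides. The vanishing of $r\dmul\boldsymbol\beta$ requires the row relations to use \emph{left} coefficients while the column dependence uses \emph{right} coefficients. To secure this, I take the row-generation statement from the right-column statement 3 via symmetry (row relations $\leftrightarrow$ transposed column relations with the sides swapped). I must check this orientation carefully: the left span of the rows corresponds to the right span of the columns of the transpose, per Remark \ref{rk:rowrank_vs_colrank}. If the sides came out mismatched, I would instead start from a left dependence, $\bfH^{[S]}\imul\boldsymbol\alpha=0$, and use the right row relations obtained from statement 2.
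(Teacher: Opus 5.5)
There is a genuine error in your orientation step, and the main line of the argument fails there. Transposition never changes the side on which scalars act. Statement 3 of Lemma~\ref{lemma:alpin_revisited} gives $\bfh_k=\bfx\dmul\boldsymbol\beta^{(k)}$. Its transpose $\bfh_k^\T=(\boldsymbol\beta^{(k)})^\T\imul\bfx^\T$ has entries $\sum_s x_{ms}\beta^{(k)}_s$: in $\bfA\imul\bfB$ the entries of $\bfB$ multiply from the left, so when the coefficient vector is the left factor, the coefficients end up on the right. By symmetry, writing $\mathbf{r}_k$ for the rows of $\bfH_\infty$, this reads $\mathbf{r}_k=\sum_s\mathbf{r}_s\beta^{(k)}_s$. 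That is a \emph{right} combination of rows, not $\bfH_\infty=\bfC\dmul\bfR$.

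Remark~\ref{rk:rowrank_vs_colrank} only equates the \emph{dimensions} of the right column span and the left row span. It does not convert individual right column relations into left row relations. With right row relations $r=\sum_s r_s c_s$, your computation gives $r\dmul\boldsymbol\beta=\sum_{s,j} r_{sj}c_s\beta_j$. Since $c_s\beta_j\neq\beta_j c_s$ in general, this does not reduce to the vanishing quantities $r_s\dmul\boldsymbol\beta$. There was also a warning sign: as written you use only statement 3. So the equality of left and right ranks of $\bfH_\infty$, which you correctly announced as essential, never actually enters.

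The repair is immediate, and your side-pairing (left row coefficients against a right column dependence) is the right one. Take the row relations from statement 2 instead. From $\bfh_k=\sum_s\alpha^{(k)}_s\bfh_s$, symmetry gives $\mathbf{r}_k=\sum_s\alpha^{(k)}_s\mathbf{r}_s$. Your computation then yields $\bfx\dmul\boldsymbol\beta=0$, which contradicts statement 3.

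Your fallback carries the same mislabelling, since statement 2 yields left row relations, not right ones. Its correct form pairs a left dependence $\sum_s\gamma_s\bfh_s^{[S]}=0$ with the right relations coming from statement 3, and contradicts statement 2. That is exactly the paper's proof, which propagates $\sum_s\gamma_s h_{n+s-1}=0$ down the sequence by induction using the right LRR for column $S+1$. The corrected version of your main line is the mirror image of that argument. It invokes the generation relation for every column $k$ at once, so it reaches all rows without an induction.
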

\begin{proof}
    Let $\bfH_\infty = [\bfh_k]_{k\geq1}$ and let $\bfh^{[S]}_k\in\bbH^{S}$ denote the restriction of $\bfh_k$ to its first $S$ entries. 
    Assume that $\rank(\bfH_\infty) = \rankL(\bfH_\infty) = \rankR(\bfH_\infty) = S$ is finite.
    First, observe that $\bfH^{[S]} = [\bfh^{[S]}_s]_{1\leq s\leq S}$ is symmetric so that, by Lemma \ref{lem:symmetric_rank}, $\rankL(\bfH^{[S]}) = \rankR(\bfH^{[S]})$. 
    Thus, it is sufficient to prove, for example, that $\rankL(\bfH^{[S]}) = S$.
    
    By Lemma \ref{lemma:alpin_revisited}, $\{\bfh_1, \ldots, \bfh_S\}$ are left linearly (respectively right linearly) independent and, for any $k> S$, $\bfh_k$ is a left (respectively right) linear combination of $\{\bfh_1, \ldots, \bfh_S\}$.
    Hence, there exists $\boldsymbol{\alpha}\in \bbH^S$ (respectively $\boldsymbol{\beta}\in \bbH^S$) such that $\bfh^{[S]}_k = \sum_{s=1}^S\alpha_s \bfh^{[S]}_s$ (respectively $\bfh^{[S]}_k = \sum_{s=1}^S \bfh^{[S]}_s\beta_s$).

    Now, suppose by contradiction that $\bfh_1^{[S]}, \ldots, \bfh_S^{[S]}$ are left linearly dependent.
    Thus, there exists $\boldsymbol{\gamma} \in \bbH^S \setminus \mathbf{0}$ such that $\sum_{s=1}^S \gamma_s\bfh_s^{[S]} = 0$.
    By the Hankel structure, this implies $\sum_{s=1}^S \gamma_sh_{n+s-1} = 0$ for $n=1, \ldots, S$. 
    Moreover, as mentioned above, the ($S+1$)-th column $\bfh_{S+1}$ of $\bfH_\infty$ can be obtained by right linear combination of the of $S$ first columns, and in particular $\bfh^{[S]}_{S+1} = \sum_{\ell=1}^S \bfh^{[S]}_\ell\beta_\ell$ or equivalently, $h_{n+S} = \sum_{\ell=1}^S h_{n+\ell-1}\beta_\ell$ for $n=1, \ldots, S$. 
    Therefore, $\sum^S_{s=1} \gamma_sh_{S+s} = \sum_{s=1}^S \gamma_s\sum_{\ell=1}^S h_{s+\ell-1}\beta_\ell = \sum_{\ell=1}^S(\sum_{s=1}^S \gamma_s h_{s+\ell-1})\beta_\ell = 0$.
    Since any column of $\bfH_\infty$ can be expressed as a right linear combination of the first $S$ columns, one obtains by recurrence that $\sum_{s=1}^S \gamma_sh_{n+s-1} = 0$ for any $n \geq 1$. 
    This equivalently means that $\sum_{s=1}^S \gamma_s\bfh_s = 0$ and therefore, the infinite columns $\bfh_1, \ldots, \bfh_S$ are left linearly dependent.
    Since by Lemma \ref{lemma:alpin_revisited},  $\lbrace\bfh_1, \ldots, \bfh_S\rbrace$ form a basis for the column space of $\bfH_\infty$, this implies that $\rank(\bfH_\infty) < S$, which contradicts the initial assumption that $\rank(\bfH_\infty) = S$.
    Therefore, $\{\bfh_1^{[S]}, \ldots, \bfh^{[S]}_S\}$ are left linearly independent and $\rankL(\bfH^{[S]}) = S$. 
\end{proof}

Following \cite{al_hankel_2017}, the study of the rank of finite Hankel matrices requires the introduction of the following triangle table:
\begin{equation}
    \tau(h_1, \ldots, h_N) = \begin{pmatrix}
        h_1 & \cdots & h_n & \cdots & h_N \\
        \vdots & \cdot & \vdots & \iddots \\
        h_n & \cdots & h_N \\
        \vdots & \iddots & \\
        h_N
    \end{pmatrix},
    \label{eq:table}
\end{equation}
which is contained in any infinite extension $\bfH_\infty$ of the sequence $[h_1,\ldots, h_N]$.
\begin{definition}
    The $n$-th column $(h_n\: \cdots\: h_N)$ of the table \eqref{eq:table} (of length $N-n+1$) is said to be a \textit{prefix left linear combination} (respectively right linear) of preceding columns if it is a left linear (respectively right linear) combination of the preceding columns truncated to their first $N-n+1$ entries.
\end{definition}
\begin{example}
    For the sequence $[0,1,1,2,3,5]$, any column (except the first one) of the table 
    \begin{equation*}
        \tau(0,1,1,2,3,5) = \begin{pmatrix}
            0 & 1 & 1 & 2 & 3 & 5 \\
            1 & 1 & 2 & 3 & 5 \\
            1 & 2 & 3 & 5 \\
            2 & 3 & 5 \\
            3 & 5 \\
            5 \\
        \end{pmatrix}
    \end{equation*}
    is a prefix linear combination of the preceding columns. For instance, it holds that $(2\: 3\: 5)= (1\: 2\: 3) + (1\: 1\: 2)$.
\end{example}

\begin{definition} \label{def:m}
    For a sequence of quaternions $[h_1,\ldots, h_N]$, let $m_\mathrm{L}$ (respectively $m_\mathrm{R}$) be the maximal number such that none of the columns of the table \eqref{eq:table} with index $i \leq m_\mathrm{L}$ (respectively $i\leq m_\mathrm{R}$) is a prefix left linear (respectively right linear) combination of the preceding columns.
\end{definition}

\begin{lemma} \label{lem:inf_extension}
    The smallest possible left rank (respectively right rank) of an infinite extension $\bfH_\infty$ is equal to $m_\mathrm{L}$ (respectively $m_\mathrm{R}$).
\end{lemma}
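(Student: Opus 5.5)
We treat the left statement; the right one follows by exchanging the side of the coefficients throughout. Two inequalities are needed. First, every infinite extension $\bfH_\infty$ has $\rank(\bfH_\infty)\geq m_\mathrm{L}$. Second, some extension attains $m_\mathrm{L}$. Write $m=m_\mathrm{L}$. We also use the convention that if $m=N$ (no column of the table is a prefix combination), we must still produce an extension of rank $N$.

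\textbf{Lower bound.} Let $\bfH_\infty$ be any extension and suppose $S=\rank(\bfH_\infty)<m$; the case of infinite rank is trivial. By Lemma \ref{lemma:alpin_revisited}, the column $\bfh_{S+1}$ of $\bfH_\infty$ is a left linear combination $\sum_{s=1}^S\alpha_s\bfh_s$ of the first $S$ columns. Truncating this identity to the first $N-S$ entries gives
\[
(h_{S+1},\ldots,h_N)^\top=\sum_{s=1}^S\alpha_s (h_s,\ldots,h_{s+N-S-1})^\top .
\]
This says that column $S+1\leq m$ of $\tau(h_1,\ldots,h_N)$ is a prefix left linear combination of the preceding columns, which contradicts the definition of $m$. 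Hence $\rank(\bfH_\infty)\geq m$.

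\textbf{Construction.} If $m<N$, column $m+1$ of the table is a prefix left linear combination: there are $\alpha_1,\ldots,\alpha_m$ with
\[
h_{m+n}=\sum_{s=1}^m\alpha_s h_{s+n-1},\qquad n=1,\ldots,N-m .
\]
We define $h_{m+n}$ for all $n>N-m$ by this same recurrence. This is consistent because the recurrence agrees with the given data on the indices where the data exist. With the extended sequence, the recurrence holds for all $n\geq1$, so $\bfh_{m+1}=\sum_s\alpha_s\bfh_s$ as infinite columns. Applying the shift $\varphi$, which commutes with left multiplication by scalars, gives $\bfh_{k+1}=\sum_s\alpha_s\bfh_{k-m+s}$ for every $k\geq m$. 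By induction every column lies in $\spanL(\bfh_1,\ldots,\bfh_m)$, so $\rankL(\bfH_\infty)\leq m$, and the lower bound gives equality. If $m=N$, we extend by zeros, $h_n=0$ for $n>N$. Then column $k$ is zero for $k>N$, so the rank is at most $N$, and again equality follows from the lower bound.

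\textbf{Main obstacle.} The delicate point is the interplay between the two sides. The infinite matrix has a single rank by Lemma \ref{lem:symmetric_rank}, yet $m_\mathrm{L}$ and $m_\mathrm{R}$ are defined separately. The lower-bound argument for $m_\mathrm{L}$ must therefore use the left version of Lemma \ref{lemma:alpin_revisited}, statement~2, and the construction must use a left recurrence. We must also check that the shift argument respects left coefficients: since $\varphi$ acts entrywise on the sequence, $\varphi(\alpha\bfv)=\alpha\varphi(\bfv)$ holds, so no commutativity is needed. As a byproduct, applying the lemma on both sides gives $m_\mathrm{L}=m_\mathrm{R}$, since each equals the minimal rank of an extension and that rank is side-independent.
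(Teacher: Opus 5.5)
Your proof is correct and follows the paper's route. The lower bound truncates a left linear relation among the first $m_\mathrm{L}$ columns of $\bfH_\infty$ to a prefix left linear combination in the table, and the upper bound extends the sequence by the left recurrence read off from the prefix relation at column $m_\mathrm{L}+1$; your order-$m_\mathrm{L}$ indexing of that recurrence and your explicit shift induction are in fact cleaner than the paper's. The only real deviation is the case $m_\mathrm{L}=N$: you extend by zeros, whereas the paper first observes $h_1=\cdots=h_{N-1}=0\neq h_N$ and then uses an order-1 left recurrence with an arbitrary nonzero $h_{N+1}$. Your choice is simpler and equally valid, since the lower bound already forces the rank to equal $N$.
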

\begin{proof}
    We start by considering the left rank case.
    Firstly, the left rank of any infinite extension $\bfH_\infty$ (i.e.,  containing the table \eqref{eq:table} as its upper left corner) is at least $m_\mathrm{L}$.
    Otherwise, if $\rankL(\bfH_\infty)<m_\mathrm{L}$, there would be a column $\bfh_i$ ($i\leq m_\mathrm{L}$) such that it is a left linear combination of the preceding columns.
    Thus, by identification of its first entries, $(h_i, \ldots, h_N)$ would be a prefix left linear combination of preceding columns which contradicts the definition of $m_\mathrm{L}$.

    Secondly, let us prove the existence of an infinite quaternion sequence such that its Hankel matrix $\bfH_\infty$ is of left rank $m_\mathrm{L}$.
    We consider the two following cases.
    
    On one hand, suppose $m_\mathrm{L} = N$. 
    Note that this means that $h_n = 0$ for $n = 1,\ldots, N-1$ and $h_N \neq 0$.
    Then, by choosing any $h_{N+1}\in\bbH^*$, it is possible to write $h_{N+1} = \left(h_{N+1}h^{-1}_N\right)h_N$. This relation permits to define an order-1 left LRR such that, for all $n>N$:
    \begin{equation*}
        h_n = \left(h_{N+1}h^{-1}_N\right) h_{n-1}.
    \end{equation*}
    
    On the other hand, consider that $m_\mathrm{L} <N$.
    Therefore, by definition of $m_\mathrm{L}$, the column $(h_{\mathrm{L}+1}, \ldots, h_N)$ of \eqref{eq:table} is a left linear combination of its preceding columns.
    By defining $(\alpha_{m_\mathrm{L}},\ldots, \alpha_N)\in\bbH^{N-m_\mathrm{L}}$ the coefficients of such left linear combination, it is possible to construct an infinite sequence following this left LRR of order $(N-m_\mathrm{L})$:
    \begin{equation*}
        \text{For $n\geq 1$,} \quad h_{N+n} = \alpha_{m_\mathrm{L}} h_{N+n-1} +\cdots+ \alpha_N h_{n+m_\mathrm{L}}.
    \end{equation*}
    Finally, for both infinite constructions, it holds that the ($m_\mathrm{L}+1$)-th column of $\bfH_\infty$ is a left linear combination of preceding ones.
    By Lemma \ref{lemma:alpin_revisited}, the rank of $\bfH_\infty$ is then equal to $m_\mathrm{L}$.

    By symmetry of reasoning for the right rank case, it is possible to construct right LRRs that define infinite matrices $\bfH_\infty$ of right ranks equal to $m_{\rm R}$.
\end{proof}

\begin{corollary} \label{cor:ml_equals_mr}
    For any quaternion sequence $[h_1,\ldots, h_N]$, it holds that $m_{\rm L} = m_{\rm R}$.
\end{corollary}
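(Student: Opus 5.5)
The plan is to combine Lemma \ref{lem:inf_extension} with the fact that the left and right ranks of an infinite Hankel matrix coincide. By Lemma \ref{lem:inf_extension}, $m_\mathrm{L}$ is the minimal left rank over all infinite extensions $\bfH_\infty$ of $[h_1,\ldots,h_N]$. Likewise, $m_\mathrm{R}$ is the minimal right rank over the same set of extensions. Every infinite Hankel matrix is symmetric, so Lemma \ref{lem:symmetric_rank} gives $\rankL(\bfH_\infty)=\rankR(\bfH_\infty)$ for each extension. The two minima are therefore taken over the same family of extensions and the same integer-valued function, so they are equal.

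Concretely, I will argue by two inequalities. Take an extension $\bfH_\infty^{\mathrm{L}}$ built in the proof of Lemma \ref{lem:inf_extension} with $\rankL(\bfH_\infty^{\mathrm{L}})=m_\mathrm{L}$; its rank is finite. Then $\rankR(\bfH_\infty^{\mathrm{L}})=m_\mathrm{L}$ as well. Since $m_\mathrm{R}$ is a lower bound for the right rank of any extension, this gives $m_\mathrm{R}\le m_\mathrm{L}$. Exchanging the roles of left and right, using the right-LRR extension, gives $m_\mathrm{L}\le m_\mathrm{R}$.

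The main obstacle is justifying Lemma \ref{lem:symmetric_rank} for infinite matrices, since that lemma is stated for finite square matrices and relies on complex adjoints. I plan to handle it through leading principal submatrices $\bfH^{[n]}$, which are square, symmetric and Hankel, so $\rankL(\bfH^{[n]})=\rankR(\bfH^{[n]})$ for each $n$. The left (resp. right) rank of $\bfH_\infty$ is the supremum over $n$ of the left (resp. right) rank of its $n\times n$ leading submatrices. This holds because any finite left-independent set of columns remains independent after truncation to sufficiently many rows, and every finite submatrix sits inside some $\bfH^{[n]}$. Taking suprema then gives equality of the ranks in the infinite case.

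Alternatively, the finiteness case can be avoided. The paper already asserts $\rankL(\bfH_\infty)=\rankR(\bfH_\infty)$ right after \eqref{eq:H_infty}, and I may cite that statement directly. Then the only step that needs care is checking that the lower-bound half of Lemma \ref{lem:inf_extension} applies to every extension, including ones of infinite rank. That is immediate, since an infinite rank is trivially at least $m_\mathrm{R}$.
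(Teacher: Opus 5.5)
Your proof is correct and follows the paper's argument. Both take the extension from Lemma~\ref{lem:inf_extension} realizing left rank $m_{\rm L}$, transfer this to its right rank via the symmetry of infinite Hankel matrices, and compare with the lower bound $m_{\rm R}$ (the paper phrases this as a contradiction, you as two inequalities). Your reduction to the leading submatrices $\bfH^{[n]}$ is a welcome addition, since the paper applies Lemma~\ref{lem:symmetric_rank}, stated only for finite square matrices, to $\bfH_\infty$ without justification.
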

\begin{proof}
    By Lemma \ref{lem:inf_extension}, there exists $\bfH_\infty$ such that $\rankL(\bfH_\infty) = m_{\rm L}$ and $\bfH'_\infty$ such that $\rankR(\bfH'_\infty) = m_{\rm R}$. 
    Suppose by contradiction that $m_{\rm L} \neq m_{\rm R}$, for example $m_{\rm L} < m_{\rm R}$ without loss of generality.
    Lemma \ref{lem:symmetric_rank} states that $\rankR(\bfH_\infty) = m_{\rm L}$, hence proving the existence of an infinite matrix of right rank strictly smaller than $m_{\rm R}$, which is a contradiction.
\end{proof}

The next theorem is one of the main results of the paper, which generalizes \cite[Theorem 4]{al_hankel_2017} to the case of quaternion-valued Hankel matrices by exhibiting the left and right ranks for such matrices.
\begin{theorem} \label{thm:main_theo}
    Let $\bfH\in\bbH^{F\times G}$ be a Hankel matrix. 
    Then, $\rankL(\bfH) = \min(F,G,m_{\rm L},F+G-m_{\rm L})$ and $\rankR(\bfH) = \min(F,G,m_{\rm R},F+G-m_{\rm R})$ where $m_\mathrm{L}$ and  $m_\mathrm{R}$ are given by Definition \ref{def:m}.
\end{theorem}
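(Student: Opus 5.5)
Throughout, write $m=m_{\rm L}$ and $N=F+G-1$, so that $F+G-m=N+1-m$. I will only treat the left rank; the right-rank statement follows by the same argument with left and right exchanged. By Corollary \ref{cor:ml_equals_mr} one even has $m_{\rm L}=m_{\rm R}$. The proof splits into an upper bound $\rankL(\bfH)\le \min(F,G,m,N+1-m)$ and a matching lower bound. Transposition helps with the bookkeeping. $\bfH^\T$ is the $G\times F$ Hankel matrix of the same sequence $[h_1,\ldots,h_N]$, so it has the same table \eqref{eq:table} and the same $m$. By Remark \ref{rk:rowrank_vs_colrank}, the left rank of $\bfH$ equals the right rank of $\bfH^\T$, which can be read off from rows instead of columns. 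This lets me assume, for example, $F\le G$ whenever convenient.

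\emph{Upper bound.} The bound by $F$ and $G$ is trivial. For the bound by $m$, I take, via Lemma \ref{lem:inf_extension}, an infinite extension $\bfH_\infty$ with $\rankL(\bfH_\infty)=m$. The matrix $\bfH$ is its upper-left $F\times G$ block. By Lemma \ref{lemma:alpin_revisited}, every column of $\bfH_\infty$ lies in the left span of the first $m$ columns. Truncating to $F$ rows, every column of $\bfH$ lies in the left span of the first $\min(m,G)$ truncated columns, so $\rankL(\bfH)\le m$.

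For the bound by $N+1-m$, I use that when $m<N$ the $(m+1)$-th column of the table is a prefix left combination of the preceding columns. Its length is $N-m$, giving coefficients $\alpha$ with
\[
h_{m+1+j}=\sum_{i}\alpha_i h_{i+j}, \qquad 0\le j\le N-m-1 .
\]
Applying this relation row by row shows the following. If $F\le N-m$, then every row index stays inside the valid range, and columns $m+1,\ldots,G$ of $\bfH$ are left combinations of earlier columns, giving $\rankL(\bfH)\le m$. If instead $F>N-m$, then only the first $N-m$ rows of the columns with index $>m$ are controlled. Any vector in the left span of the columns is then determined by its projection onto those first $N-m$ rows, which are coupled to the first $m$ columns, together with the remaining coordinates. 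The plan is to count dimensions as $(N-m)+(F-(N-m))$ adjusted by $G-m$, and to show the result is $\le N+1-m$. Concretely, I would bound the rank by the number of columns with index $\le m$ that survive ($\le G$), and check that $G\le m$ forces $\min(G,\ldots)\le N+1-m$. If that counting fails, I would apply the transposed statement to the rows.

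\emph{Lower bound.} Let $k=\min(F,G,m,N+1-m)$. I will show that the first $k$ columns of $\bfH$ are left linearly independent. Suppose $\sum_{s\le t}\gamma_s\bfh_s=0$ on the $F$ rows with $\gamma_t\neq0$ and $t\le k$. Then column $t$ of $\bfH$ is a left combination of the preceding columns on $F$ entries. This does not yet contradict the definition of $m$, because that definition involves the longer length $N-t+1$ of table column $t$.

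The main obstacle is extending the relation from $F$ entries to $N-t+1$ entries. I would proceed as in the proof of Theorem \ref{thm:kronecker_quat}, which is where non-commutativity is genuinely handled. The relation $\sum\gamma_s h_{n+s-1}=0$ holds for $n\le F$. I would propagate it to larger $n$ by using right relations among the columns: write later entries $h_{n+s-1}$ as right combinations $\sum_\ell h_{\cdot}\beta_\ell$ coming from the extension $\bfH_\infty$ of right rank $m$ (which exists by Lemma \ref{lem:inf_extension} and Corollary \ref{cor:ml_equals_mr}). The left coefficients $\gamma$ and the right coefficients $\beta$ then commute past each other exactly as in that proof. The inequalities $t\le m$ and $t\le N+1-m$ should be precisely what guarantees that the propagation stays inside the known entries $h_1,\ldots,h_N$, up to length $N-t+1$. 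Once the relation holds on all $N-t+1$ entries, column $t$ of the table is a prefix left combination with $t\le m$, which contradicts Definition \ref{def:m}. Hence $\rankL(\bfH)\ge k$, which together with the upper bound gives equality.
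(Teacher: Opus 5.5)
Your proposal has a genuine gap in the one regime where the term $F+G-m$ matters, namely $m>\max(F,G)$.

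\textbf{Upper bound.} The prefix relation from table column $m+1$ is useless when $m>\max(F,G)$. Since $m+1>G$, the matrix $\bfH$ has no columns of index $>m$. Your fallback ``$G\le m$ forces $\min(G,\ldots)\le N+1-m$'' is false, because $G\le F+G-m$ holds iff $m\le F$. Transposing does not help, since this regime is symmetric in $F$ and $G$.

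\textbf{Lower bound.} The claim that the first $k$ columns of $\bfH$ are left linearly independent is false. Take $[0,0,1]$ with $F=G=2$. Then $m=3$ and $k=1$, and $\bfH=\begin{bmatrix}0&0\\0&1\end{bmatrix}$ has a zero first column. The claim also fails for $F<m\le G$: $[0,1,0]$ with $F=1$, $G=3$ gives $m=2$, $k=1$ and $h_1=0$. That case can only be rescued by transposing. The underlying reason is that your propagation step needs $m\le F$, not $t\le m$ and $t\le N+1-m$. You want $\sum_s\gamma_s h_{n+s-1}=0$ at a new index $n$ from the order-$m$ right relation $h_{m+j}=\sum_{\ell}h_{j+\ell-1}\beta_\ell$. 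For that you need the left relation already at the $m$ consecutive indices $n-m,\ldots,n-1$, but you only start with $n=1,\ldots,F$. When $m\le F$ your argument does go through, and it is essentially an inline re-proof of Theorem \ref{thm:kronecker_quat}.

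\textbf{Missing idea.} This is the paper's Case 3: leave $\bfH$ and work with the $F\times m$ matrix $\bfH^{[F]}$ formed by the first $F$ rows of the leading block $\bfH^{[m]}$ of a minimal extension.
\begin{itemize}
\item By Theorem \ref{thm:kronecker_quat}, $\bfH^{[m]}$ has left rank $m$. So its first $F$ rows are independent (Remark \ref{rk:rowrank_vs_colrank}), and $\rankL(\bfH^{[F]})=F$.
\item The first $G$ columns of $\bfH^{[F]}$ form $\bfH$.
\item Each column $i\in\{G+1,\ldots,m\}$ is not a left combination of its predecessors. Restricting to the first $N-i+1<F$ rows would give a prefix left combination in the table with $i\le m$, contradicting the definition of $m$.
\item Hence each of these $m-G$ columns raises the left rank by exactly one, so $F=\rankL(\bfH)+m-G$. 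This gives both inequalities at once.
\end{itemize}
In Cases 1--2 the paper likewise reads the lower bound directly off the full-rank block $\bfH^{[m]}$, through its columns or its rows, rather than propagating relations by hand.
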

\begin{proof}
    Similarly to Lemma \ref{lem:inf_extension}, the proof is restricted to the left case by symmetry of arguments with the right case.
    Following Lemma \ref{lem:inf_extension}, let us consider the infinite extension $\bfH_\infty$ of rank $m_\mathrm{L}$.
    In the following, we denote $\bfH^{[m_{\rm L}]}\in\bbH^{m_{\rm L}\times m_{\rm L}}$ the upper left corner square matrix of $\bfH_\infty$.
    By Theorem \ref{thm:kronecker_quat}, one has $\rankL(\bfH^{[m_{\rm L}]}) = m_{\rm L}$. 
    Similarly to \cite{al_hankel_2017}, the proof studies the three following cases.

    Case 1. Assume $m_{\rm L}<\min(F,G)$. This also implies that $m_{\rm L} \leq F + G - m_{\rm L}$. In this case, $\bfH^{[m_{\rm L}]}$ is a corner submatrix of $\bfH$. 
    Since $\rankL(\bfH^{[m_{\rm L}]}) = m_{\rm L}$, it holds that $\rankL(\bfH) \geq m_{\rm L}$. Because $\rankL(\bfH_\infty) = m_{\rm L}$ and $\bfH$ is also a submatrix of $\bfH_\infty$, it also holds that $\rankL(\bfH)\leq m_{\rm L}$ which gives $\rankL(\bfH) = m_{\rm L} = \min(F,G,m_{\rm L},F+G-m_{\rm L})$.

    Case 2. Suppose $\min(F,G)<m_{\rm L}<\max(F,G)$. First, let us consider $G<m_{\rm L}<F$.
    Since $\rankL(\bfH^{[m_{\rm L}]}) = m_{\rm L}>G$,  the first $G$ columns of $\bfH^{[m_{\rm L}]}$ (which are the truncated columns of $\bfH$ of length $m_{\rm L}$) are left linearly independent. 
    Hence, the columns of $\bfH$ are left linearly independent as well. This ensures that $\rankL(\bfH) = G = \min(F,G,m_{\rm L},F+G-m_{\rm L})$.
    Secondly, for $F < m_{\rm L} < G$,  $\rankL(\bfH^{[m_{\rm L}]}) = m_{\rm L}>F$ implies that the first $F$ rows of $\bfH^{[m_{\rm L}]}$ are right linearly independent (see Remark \ref{rk:rowrank_vs_colrank}). Therefore, the rows of $\bfH$ are right linearly independent as well, and $\rankL(\bfH) = F = \min(F,G,m_{\rm L},F+G-m_{\rm L})$.

    Case 3. Assume $m_{\rm L}>\max(F,G)$. Let us denote by $\bfH^{[F]}\in\bbH^{F\times m_{\rm L}}$ the matrix formed by the first $F$ rows of $\bfH^{[m_{\rm L}]}$. 
    Since $\rankL(\bfH^{[m_{\rm L}]})= m_{\rm L} > F$ the $F$ first rows of $\bfH^{[m_{\rm L}]}$ are right linearly independent (see Remark \ref{rk:rowrank_vs_colrank}) which ensures that $\rankL(\bfH^{[F]}) = F$.
    Then, it must be noted that the first $G$ columns of $\bfH^{[F]}$ define the matrix $\bfH$. As for any column of $\bfH^{[F]}$ of index $i>G$, it is not a left linear combination of preceding columns by definition of $m_{\rm L}$. Hence, two facts hold for those $m_{\rm L}-G$ columns. First, those columns are left linearly independent. Second, the left span of those columns does not intersect with the left span of columns of $\bfH$. Therefore, it holds that $\rankL(\bfH^{[F]}) = F = \rankL(\bfH)+m_{\rm L}-G$ which gives $\rankL(\bfH) = F+G-m_{\rm L} = \min(F,G,m_{\rm L},F+G-m_{\rm L})$.
\end{proof}
One can observe that this theorem is similar to Theorem \ref{thm:hankel_field} (which holds for Hankel matrices over a field), with the necessary distinction {\it a priori} between left and right ranks due to non-commutativity of quaternion algebra. 
However, the last theorem below states that this distinction is, perhaps not surprinsingly, not necessary. 
\begin{theorem} \label{thm:equalityofranks}
    For any Hankel matrix $\bfH\in\bbH^{F\times G}$, it holds that $\rankL(\bfH) = \rankR(\bfH)$.
\end{theorem}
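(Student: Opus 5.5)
The argument is a direct combination of Theorem \ref{thm:main_theo} and Corollary \ref{cor:ml_equals_mr}.

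Let $\bfH = \calH_F(\bfh)\in\bbH^{F\times G}$ be generated by $\bfh = [h_1,\ldots,h_N]^\top$ with $N=F+G-1$. The quantities $m_{\rm L}$ and $m_{\rm R}$ of Definition \ref{def:m} depend only on the generating sequence $[h_1,\ldots,h_N]$ and the table $\tau(h_1,\ldots,h_N)$ in \eqref{eq:table}. In particular, they do not depend on how that sequence is arranged into an $F\times G$ matrix.

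Theorem \ref{thm:main_theo} gives
\begin{equation*}
\rankL(\bfH) = \min(F,G,m_{\rm L},F+G-m_{\rm L}) \quad\text{and}\quad \rankR(\bfH) = \min(F,G,m_{\rm R},F+G-m_{\rm R}).
\end{equation*}
Corollary \ref{cor:ml_equals_mr} gives $m_{\rm L}=m_{\rm R}$ for every quaternion sequence, and hence for $[h_1,\ldots,h_N]$. The two expressions are therefore the same function of $(F,G,m)$ evaluated at the same $m$, so $\rankL(\bfH)=\rankR(\bfH)$.

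The only delicate point is that Theorem \ref{thm:main_theo} and Corollary \ref{cor:ml_equals_mr} must apply to every Hankel matrix, including the boundary situations in the case analysis of Theorem \ref{thm:main_theo}. These are $m_{\rm L}=\min(F,G)$, $m_{\rm L}=\max(F,G)$, and a zero sequence. In that proof the inequalities were written as strict, so equality cases are not covered literally.

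I would handle these directly by symmetry of the argument. In each equality case the formula value coincides with the value given by an adjacent case. For example, if $m_{\rm L}=\min(F,G)$, the reasoning of Case 1 still applies: $\bfH^{[m_{\rm L}]}$ fits inside $\bfH$, which gives rank $m_{\rm L}$. The same observation applies verbatim to the right rank, because $m_{\rm R}=m_{\rm L}$.

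Alternatively, the square case is already covered by Proposition \ref{prop:squareHankel}. In every situation the left and right formulas involve the identical parameter, so equality holds.
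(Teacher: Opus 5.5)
Your proposal is correct and matches the paper's proof, which is exactly the one-line combination of Theorem~\ref{thm:main_theo} with Corollary~\ref{cor:ml_equals_mr}: both rank formulas are the same function of $(F,G,m)$, evaluated at $m_{\rm L}=m_{\rm R}$. Your extra check of the equality cases $m_{\rm L}=\min(F,G)$ and $m_{\rm L}=\max(F,G)$ is correct and worthwhile, since these cases are not literally covered by the strict inequalities in the proof of Theorem~\ref{thm:main_theo}; the zero sequence gives $m_{\rm L}=0$ and already falls under Case~1.
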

\begin{proof}
    The proof is a straightforward application of Theorem \ref{thm:main_theo} and Corollary \ref{cor:ml_equals_mr}.
\end{proof}
Nonetheless, the existence of these left and right ranks has further implications on the properties of quaternions LRRs which are discussed in the next section.

\section{Discussions} \label{sec:discussion}

\paragraph{Remarks on Theorem \ref{thm:main_theo}}
Let us consider the problem of identifying a quaternion-valued LRR of order-$S$ from one associated Hankel matrix $\bfH$.
First, an obvious necessary condition to identify such LRR is that the dimension of $\bfH$ are greater than $m = S$ ($F,G\geq S$).
Figure \ref{fig:figure_rank} highlights this condition along with the rank "plateau" of Hankel matrices characterized by Theorem \ref{thm:main_theo}.
This phenomenon is well-known for real and complex Hankel matrices, see e.g., \cite{heinig_algebraic_2013,gillard_hankel_2022}.
\begin{figure}
    \centerline{\resizebox{0.55\linewidth}{!}{

\tikzset{every picture/.style={line width=0.75pt}} 

\begin{tikzpicture}[x=0.75pt,y=0.75pt,yscale=-1,xscale=1]

\draw [line width=3]    (100,320) -- (625,320) ;
\draw [shift={(630,320)}, rotate = 180] [color={rgb, 255:red, 0; green, 0; blue, 0 }  ][line width=3]    (20.77,-6.25) .. controls (13.2,-2.65) and (6.28,-0.57) .. (0,0) .. controls (6.28,0.57) and (13.2,2.66) .. (20.77,6.25)   ;
\draw [line width=3]    (120,340) -- (120,125) ;
\draw [shift={(120,120)}, rotate = 90] [color={rgb, 255:red, 0; green, 0; blue, 0 }  ][line width=3]    (20.77,-6.25) .. controls (13.2,-2.65) and (6.28,-0.57) .. (0,0) .. controls (6.28,0.57) and (13.2,2.66) .. (20.77,6.25)   ;
\draw [color={rgb, 255:red, 80; green, 80; blue, 80 }  ,draw opacity=1 ][line width=1.5]    (120,320) -- (240,200) ;
\draw [color={rgb, 255:red, 80; green, 80; blue, 80 }  ,draw opacity=1 ][line width=1.5]    (420,200) -- (540,320) ;
\draw [color={rgb, 255:red, 80; green, 80; blue, 80 }  ,draw opacity=1 ][line width=1.5]    (240,200) -- (420,200) ;
\draw [color={rgb, 255:red, 80; green, 80; blue, 80 }  ,draw opacity=1 ][line width=1.5]  [dash pattern={on 1.69pt off 2.76pt}]  (120,200) -- (240,200) ;
\draw [color={rgb, 255:red, 80; green, 80; blue, 80 }  ,draw opacity=1 ][line width=1.5]  [dash pattern={on 1.69pt off 2.76pt}]  (420,200) -- (540,200) ;
\draw [color={rgb, 255:red, 80; green, 80; blue, 80 }  ,draw opacity=1 ][line width=1.5]  [dash pattern={on 1.69pt off 2.76pt}]  (540,320) -- (540,200) ;
\draw [color={rgb, 255:red, 80; green, 80; blue, 80 }  ,draw opacity=1 ][line width=1.5]  [dash pattern={on 1.69pt off 2.76pt}]  (420,320) -- (420,200) ;
\draw [color={rgb, 255:red, 80; green, 80; blue, 80 }  ,draw opacity=1 ][line width=1.5]  [dash pattern={on 1.69pt off 2.76pt}]  (240,320) -- (240,200) ;

\draw (140,116) node [anchor=north west][inner sep=0.75pt]  [font=\LARGE] [align=left] {$\rank(\bfH)$};
\draw (98,192) node [anchor=north west][inner sep=0.75pt]  [font=\large] [align=left] {$S$};
\draw (238,331) node [anchor=north west][inner sep=0.75pt]  [font=\large] [align=left] {$S$};
\draw (418,332) node [anchor=north west][inner sep=0.75pt]  [font=\large] [align=left] {$S$};
\draw (608,332) node [anchor=north west][inner sep=0.75pt]  [font=\LARGE] [align=left] {$F$};

\end{tikzpicture}

    }}
    \caption{\centering Rank of a Hankel matrix with respect to its parameter size $F$. The observed plateau shows the existence of an order-$S$ LRR on the coefficients of the matrix $\bfH$. Therefore, the rank of $\bfH$ is at most $S$.}
    \label{fig:figure_rank}
\end{figure}
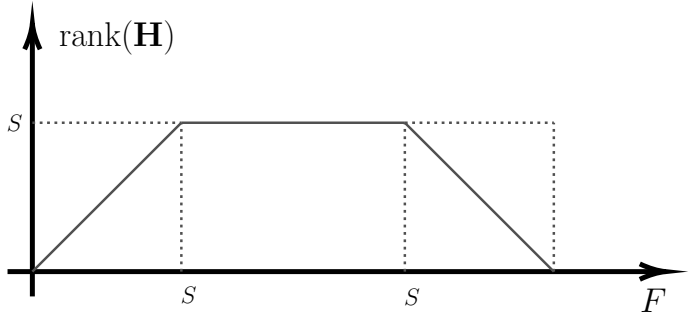 
However, even in the case of $F,G \geq S$, expressions (2) and (3) of Lemma \ref{lemma:alpin_revisited} show that there exists (at least) two generating quaternions LRRs, with left and right quaternion coefficients, respectively.

In other words, one may obtain suitable coefficients for a left LRR, even though the sequence followed in the first place a right LRR with different coefficients.
This may limit the usage of LRRs for practical applications whenever identifiability of the generating sequence is key. 

\paragraph{Switching from left to right LRRs and vice-versa}

Given a left LRR of order-$S$, there is no simple way, up to our knowledge, to obtain the corresponding right LRRs coefficients and vice-versa. 
However, in some cases this is possible by means of elementary quaternion algebra. 

Consider first the order-$1$ {\it left} LRR defined as $h_{n+1} = \mu h_n$ for all $n \in \mathbb{N}^*$ and $h_1 \in \bbH^*$. 
The elements of the sequence are thus $h_1, \mu h_1, \mu^2 h_1, \ldots$. 
One can easily check that the order-$1$ {\it right} LRR defined as $h'_{n+1} = h_n'\nu$ with $\nu = h_1^{-1} \mu h_1$ and $h_1' = h_1$ produces exactly the same sequence, i.e., $h_n = h_n'$ for any value of $n$. 
In other words, for order-$1$ quaternion LRRs, the recurrence coefficients $\mu, \nu$ are related through an involution by the initial point $h_1$. 

Such observation becomes more cumbersome as the order-$S$ increases.
For instance, consider two order-$1$ left LRRs such that $h_{n+1} = \mu_1 h_n$ and $q_{n+1} = \mu_2 q_n$, with $\mu_1, \mu_2 \in \bbH^*$ and $h_1, q_1 \in \bbH^*$. 
Define the sequence $w$ such that $w_n = h_n +q_n$ for any $n$.
Tedious calculations show that this sequence obeys to the order-$2$ left LRR 
$$\begin{cases}
\forall n \in \mathbb{N}^*, w_{n+2} = \eta w_{n+1} + \left[\mu_1 - \eta\right]\mu_1 w_n\\
w_1 = h_1 + q_1, w_2 = \mu_1 h_1 + \mu_2 q_1
\end{cases},\quad \eta := \left[\mu_2^2 - \mu_1^2\right]\left[\mu_2 - \mu_1\right]^{-1}. $$
By exploiting the left-right correspondence of order-$1$ quaternion LRR above, one can obtain the corresponding order-$2$ right LRR. 
Let $\nu_1 = h_1^{-1} \mu_1 h_1$ and $\nu_2 = q_1^{-1} \mu_2 q_1$. Similar calculations show that the above left recurrence can be written as a right recurrence such that
$$w_{n+2} = w_{n+1}\zeta + w_n \nu_1\left[\nu_1-\zeta\right],\quad \zeta := \left[\nu_2-\nu_1\right]^{-1}\left[\nu_2^2 - \nu_1^2\right],$$
for any $n \in \mathbb{N}^*$.
Switching between left and right recurrences is here made possible by the decomposition of the sequence $\lbrace w_n\rbrace_{n\geq 1}$ in sum of order-$1$ LRRs; however how to perform such correspondence in the general case -- that is, order-$S$ LRRs without the knowledge of a sum-of-order-$1$ LRRs -- remains an interesting open question. 

\paragraph{Other possible LRRs definitions} 
It is possible to define quaternion LRRs in a more general way, by considering both left and right coefficients.
For $S\geq1$ and $\{\mu_s,\nu_s\}^S_{s=1}\subset\bbH$, one can define a quaternion LRR as follows:
\begin{equation}
    \label{eq:lr_lrr}
    \forall n\in\bbN, \; h_{n+S} = \mu_Sh_{n+S-1}\nu_S+\cdots+\mu_1h_n\nu_1.
\end{equation}
Such a definition for quaternion LRRs with both left and right coefficients is more complicated to study, as the results proved previously on the rank of the associated Hankel matrix may not hold.
However, in some cases, such "double-sided" LRRs turn out to equivalent to a left or a right LRR, as shown by the following example.
\begin{example}
    Consider the following order-1 left and right LRR, as defined in Eq. \eqref{eq:lr_lrr}, with coefficients $\mu$ and $\nu$ being $p$-th roots of unity (i.e., such that $\mu^p=\nu^p = -1$):
    \begin{equation}
        \left\{ \begin{array}{l}
            \forall n\in\bbN,\;h_{n+1} = \mu h_n\nu \\
            h_1 \in\bbH^*
        \end{array}\right..
        \label{eq:lr_lrr_unitary}
    \end{equation}
    For any integer $n$, one has $h_{n+p} = h_n$. 
    Hence, the LRR \eqref{eq:lr_lrr_unitary} is an order-$p$ left or right LRR. 
\end{example}

\section{Acknowledgements}

This research was supported by the ANR project RICOCHET ANR-21-CE48-0013 and has received financial support from the CNRS through the MITI interdisciplinary programs.

The authors are grateful to C. de Seguins Pazzis for pointing out the use of an incorrect argument in an early version of the manuscript, which led to improvements in this work.

\end{document}